\documentclass[12pt]{article}
\usepackage{cite}
\usepackage{amssymb}
\usepackage{amsmath}
\usepackage{graphicx}
\vfuzz4pt 
\hfuzz4pt 
\newtheorem{thm}{Theorem}[section]

\newtheorem{lem}[thm]{Lemma}

\newtheorem{defn}[thm]{Definition}

\usepackage{graphics}
\numberwithin{equation}{section}

\newcommand{\eh}{\hfill}\newlength{\sperr}

\newenvironment{proof}{{\settowidth{\sperr}{\bf\rm
Proof}%
\par\addvspace{0.3cm}\noindent\parbox[t]{1.3\sperr}
{\textit{ P\eh r\eh o\eh o\eh f\eh .}}%
}}{\nopagebreak\mbox{}
$\Box$\par\addvspace{0.3cm}}

\def\ve{\varepsilon}
\def\nn{\nonumber}

\newtheorem{Pa}{Paper}[section]

\newtheorem{Rk}[Pa]{{\bf Remark}}

\title{Convolution type form of the Ito representation
of the infinitesimal generator  for Levy processes}
\author{Lev Sakhnovich}
\date{}
\begin{document}
\maketitle

\thanks{99 Cove ave., Milford, CT, 06461, USA \\
 E-mail: lsakhnovich@gmail.com}\\

 \textbf{Mathematics Subject Classification (2010):} Primary 60G51; \\ 
 Secondary 60J45;
 45A05 
 
 \textbf{Keywords.} Semigroup, generator, Ito form, convolution form, potential,
 quasi-potential.
\begin{abstract} In the present paper we show that the  Ito representation
of the infinitesimal generator $L$ for Levy processes can be written in a
convolution type form. Using the obtained convolution form and the theory
of integral equations with difference kernels we study the properties
of Levy processes.
  \end{abstract}

\section{Main notions}\label{sec1}

 Let us introduce  the notion of the Levy processes.

\begin{defn}\label{Definition 1.1.}
A stochastic process $\{X_{t}:t{\geq}0\}$ is called Levy
process ,if the following conditions are fulfilled:\\
1. Almost surely $X_{0}=0$, i.e. $P(X_{0}=0)=1$.\\
(One says that an event happens almost surely (a.s.) if it happens with probability one.)\\
2. For any $0{\leq}t_{1}<t_{2}...<t_{n}<\infty$ the random variables \\
 $X_{t_{2}}-X_{t_{1}}, X_{t_{3}}-X_{t_{4}},..., X_{t_{n}}-X_{t_{n-1}}$\\ are independent (independent increments).\\
( To call the increments of the process $X_{t}$ \emph{independent} means that
increments $X_{t_{2}}-X_{t_{1}}, X_{t_{3}}-X_{t_{4}},..., X_{t_{n}}-X_{t_{n-1}}$ are mutually  (not just
pairwise) independent.)\\
3. For any $s<t$ the distributions of   $X_{t}-X_{s}$ and $X_{t-s}$ are equal
(stationary increments).\\
4. Process $X_{t}$ is almost surely right continuous with left limits.\\
 Then Levy-Khinchine formula gives (see \cite{Bert}, \cite{Sato})
 \end{defn}
\begin{equation}
\mu(z,t)=E\{\mathrm{exp}[izX_{t}]\}=
\mathrm{exp}[-t\lambda(z)],\quad t{\geq}0,  \label{1.1} \end{equation}
where
\begin{equation}
\lambda(z)=\frac{1}{2}Az^{2}-i{\gamma}z-\int_{-\infty}^{\infty}(e^{ixz}-1-ixz1_{|x|<1})\mu(dx).
 \label{1.2} \end{equation} Here $A{\geq}0,\quad \gamma=\overline{\gamma},\quad z=\overline{z}$ and
 $\mu(dx)$ is a measure on the axis $(-\infty,\infty)$
  satisfying the conditions
\begin{equation}
\int_{-\infty}^{\infty}\frac{x^{2}}{1+x^{2}}\mu(dx)<\infty.
 \label{1.3} \end{equation}
The  Levy-Khinchine formula is determined by the Levy-Khinchine triplet
(A,$\gamma, \mu(dx)$).\\
By $P_{t}(x_{0},\Delta)$ we denote the probability
$P(X_{t}{\in}\Delta)$ when $P(X_{0}=x_{0})=1$ and $\Delta{\in}R$.
The transition operator $P_{t}$ is defined by the formula
\begin{equation}
P_{t}f(x)=\int_{-\infty}^{\infty}P_{t}(x,dy)f(y). \label{1.4} \end{equation} Let
$C_{0}$ be the Banach space of continuous functions $f(x)$ ,
satisfying the condition $\mathrm{lim}f(x)=0,\quad |x|{\to}\infty$
with the norm $||f||=\mathrm{sup}_{x}|f(x)|$. We denote by
$C_{0}^{n}$ the set of $f(x){\in}C_{0}$ such that
$f^{(k)}(x){\in}C_{0},\quad (1{\leq}k{\leq}n).$ It is known that
\cite{Sato}
\begin{equation} P_{t}f{\in}C_{0}, \label{1.5} \end{equation}
if $f(x){\in}C_{0}^{2}.$\\
Now we formulate the following important result (see \cite{Sato}) .

\begin{thm}\label{Theorem 1.1.} \textbf{(Levy-Ito decomposition.)}
{The family of the operators $P_{t}\quad
(t{\geq}0)$ defined by the Levy process $X_{t}$ is a strongly
continuous  semigroup on $C_{0}$ with the norm $||P_{t}||=1$. Let
$L$ be its infinitesimal generator. Then}
\begin{equation} Lf=\frac{1}{2}A\frac {d^{2}f}{dx^{2}}+\gamma
\frac{df}{dx}+\int_{-\infty}^{\infty}(f(x+y)-f(x)-y\frac{df}{dx}1_{|y|<1})\mu(dy), \label{1.6} \end{equation}
{where} $f{\in}C_{0}^{2}$.
\end{thm}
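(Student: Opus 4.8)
The plan is to reduce the whole statement to an analysis of the characteristic exponent $\lambda(z)$ through the Fourier transform, using the Lévy--Khinchine formula \eqref{1.1}. The semigroup structure comes directly from the defining properties of $X_t$: stationary and independent increments yield the Chapman--Kolmogorov identity $P_{s+t}=P_sP_t$, equivalently $\mu(z,s+t)=\mu(z,s)\mu(z,t)$, which is visibly satisfied by $e^{-(s+t)\lambda(z)}$. Since each $P_t(x,\cdot)$ is a probability measure, $|P_tf(x)|\le\|f\|$ for every $x$, so $\|P_t\|\le1$; testing against functions in $C_0$ that are close to $1$ on an arbitrarily large compact set gives $\|P_t\|=1$. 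Strong continuity, $\|P_tf-f\|\to0$ as $t\to0^+$, follows from the almost sure right continuity of the paths (so $X_t\to0$ in probability) together with the uniform continuity of $f\in C_0$; it suffices to verify this on the dense subspace $C_0^2$, on which \eqref{1.5} guarantees $P_tf\in C_0$.

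The heart of the proof is the Fourier-multiplier representation of $P_t$. Writing $f\in C_0^2$ through its Fourier transform $f(x)=\frac1{2\pi}\int\widehat f(z)e^{ixz}\,dz$ and using $P_tf(x)=E\{f(x+X_t)\}$ together with \eqref{1.1}, I would interchange expectation and integration to obtain
\[
P_tf(x)=\frac1{2\pi}\int_{-\infty}^{\infty}\widehat f(z)\,e^{ixz}\,e^{-t\lambda(z)}\,dz.
\]
Thus $P_t$ acts in the Fourier domain as multiplication by the symbol $e^{-t\lambda(z)}$.

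Differentiating at $t=0$ then identifies the generator as the Fourier multiplier with symbol $-\lambda(z)$, namely
\[
Lf(x)=\frac1{2\pi}\int_{-\infty}^{\infty}\widehat f(z)\,e^{ixz}\,\bigl(-\lambda(z)\bigr)\,dz.
\]
Inserting the explicit $\lambda(z)$ from \eqref{1.2} and inverting term by term recovers \eqref{1.6}: multiplication by $-\frac12Az^2$ returns $\frac12Af''$, multiplication by $i\gamma z$ returns $\gamma f'$, and, after a Fubini interchange of the $z$-integral with $\mu(dy)$, the factor $e^{iyz}-1-iyz1_{|y|<1}$ produces exactly $f(x+y)-f(x)-yf'(x)1_{|y|<1}$ under the inverse transform, leaving the integral against $\mu(dy)$ in \eqref{1.6}.

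The main obstacle is the rigorous justification of the two limiting operations rather than the algebra. First, the passage $t\to0$ must hold in the sup-norm of $C_0$, not merely pointwise; this forces the restriction to $f\in C_0^2$ and a careful dominated-convergence argument controlling $(e^{-t\lambda(z)}-1)/t$ uniformly. Second, and more delicately, the integral term is only conditionally convergent near $y=0$: the integrand $e^{iyz}-1-iyz1_{|y|<1}=O(y^2)$ as $y\to0$, so convergence against $\mu(dy)$ relies precisely on the moment condition \eqref{1.3}, and the compensating drift $-iyz1_{|y|<1}$ cannot be dropped. Making the Fubini interchange legitimate therefore requires splitting the Lévy measure into its small-jump part $|y|<1$ and large-jump part $|y|\ge1$ and estimating each separately, which is where the quadratic decay of the integrand and the smoothness $f\in C_0^2$ must be used in tandem.
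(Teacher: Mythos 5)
The first thing to note is that the paper contains no proof of this theorem at all: it is quoted as a known result, with the reader sent to \cite{Sato}. So there is no internal argument to compare yours against, and your proposal must stand as a self-contained proof of the classical fact. Within it, the semigroup identity, the contraction property with $||P_{t}||=1$, and strong continuity are handled correctly, and the Fourier-multiplier skeleton (symbol $e^{-t\lambda(z)}$, differentiated at $t=0$ to produce $-\lambda(z)$) is the standard textbook route; the term-by-term matching of \eqref{1.2} with \eqref{1.6} is also correct in substance.

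The genuine gap is your opening move: writing $f\in C_{0}^{2}$ through its Fourier transform $f(x)=\frac{1}{2\pi}\int\widehat f(z)e^{ixz}dz$. Membership in $C_{0}^{2}$ imposes no decay rate at infinity: a function such as $f(x)=1/\log(e+x^{2})$ lies in $C_{0}^{2}$ but in neither $L^{1}$ nor $L^{2}$, so $\widehat f$ exists only as a tempered distribution, and the condition your Fubini and dominated-convergence steps actually require, $\int|\widehat f(z)|(1+z^{2})dz<\infty$ (recall $|\lambda(z)|\le C(1+z^{2})$), does not even make sense for such $f$. The two obstacles you flag (uniformity in $t$, and the $O(y^{2})$ cancellation near $y=0$ against \eqref{1.3}) are real but controllable precisely under that integrability; the obstacle you do not flag is the one that breaks the argument as written. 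What you have proved is \eqref{1.6} for the proper subclass of functions with integrable, quadratically weighted Fourier transform (e.g.\ the Schwartz class or $C_{c}^{\infty}$), not for all $f\in C_{0}^{2}$ as the theorem asserts. To close the gap you need an additional idea: the generator of a strongly continuous contraction semigroup is a closed operator. Write $L_{0}$ for the operator on the right-hand side of \eqref{1.6}; the condition \eqref{1.3} yields the bound
\[
\norm{L_{0}g} \le \frac{A}{2}\norm{g''} + |\gamma|\,\norm{g'} + \frac{1}{2}\norm{g''}\int_{|y|<1}y^{2}\mu(dy) + 2\norm{g}\int_{|y|\ge 1}\mu(dy),
\]
so if $f_{n}\in C_{c}^{\infty}$ is obtained from $f\in C_{0}^{2}$ by cutoff and mollification, with $f_{n}\to f$, $f_{n}'\to f'$, $f_{n}''\to f''$ uniformly, then $L_{0}f_{n}\to L_{0}f$ in $C_{0}$; since each $f_{n}$ lies in the class where your Fourier argument is valid, $Lf_{n}=L_{0}f_{n}$, and closedness of $L$ gives $f\in\mathcal{D}(L)$ with $Lf=L_{0}f$. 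This closure-plus-density step is essentially how the proof is completed in \cite{Sato}; without it, the theorem as stated is not proved.
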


\section{Convolution type form of infinitesimal generator}
\label{sec2}\paragraph{1.}
In this section we prove that  the
infinitesimal generator $L$ can be represented in the special
convolution type form
\begin{equation}
Lf=\frac{d}{dx}S\frac{d}{dx}f, \label{2.1}
\end{equation}
where the operator $S$ is defined by the relation
\begin{equation}
Sf=\frac{1}{2}Af+\int_{-\infty}^{\infty}k(y-x)f(y)dy. \label{2.2} \end{equation}
We note that  for arbitrary $a\, (0<a<\infty)$ the inequality
\begin{equation} \int_{-a}^{a}|k(t)|dt<\infty \label{2.3} \end{equation}
is true.\\ Formula \eqref{2.2} was proved  before in our works \cite{Sakh6} under some additional
conditions. In the present paper we omit these additional conditions and prove the formula \eqref{2.2} for the general case.The representation of $L$ in form \eqref{2.1} is convenient as
the operator $L$ is expressed with the help of the classic
differential and convolution operators. Using the obtained
convolution form of the generator $L$ and the theory of integral equations
with difference kernels \cite{Sakh5} we investigate the properties of a wide class
of Levy processes.\\By $C(a)$ we denote the
set of functions $f(x){\in}C_{0}$ which have the following property:
 \begin{equation} f(x)=0 ,\quad x{\notin}[-a,a]\label{2.4} \end{equation}
i.e. the function $f(x)$ is equal to zero in the neighborhood of $x=\infty$.
 We note, that parameter $a$
can be different for different $f$.\\
We introduce the functions
\begin{equation}\mu_{-}(x)=\int_{-\infty}^{x}\mu(dx),\,x<0,\label{2.6} \end{equation}
\begin{equation}\mu_{+}(x)=-\int_{x}^{\infty}\mu(dx),\,x>0,\label{2.5} \end{equation}
where the functions $\mu_{-}(x)$ and $\mu_{+}(x)$ are  monotonically increasing on the  half-axis   $(-\infty,0]$ and $[0,\infty)$ respectively and
\begin{equation}\mu_{+}(x){\to}0,\,x{\to}+\infty;\,
\mu_{-}(x){\to}0,\,x{\to}-\infty.\label{2.6a}\end{equation}
We note that
\begin{equation}\mu_{-}(x){\geq}0,\,x<0;\,\mu_{+}(x){\leq}0,\,x>0.\label{2.6b}\end{equation} In view of \eqref{1.3}
the integrals in the right sides of \eqref{2.5} and \eqref{2.6} are convergent.
Hence we have\\
\begin{equation}\int_{-\infty}^{\infty}f(x)\mu(dx)=\int_{-\infty}^{0}f(x)d\mu_{-}(x)+
\int_{0}^{\infty}f(x)d\mu_{+}(x).\label{2.6c}\end{equation}
\begin{thm}\label{Theorem 2.1} The following relations
\begin{equation}{\ve}^{2}\mu_{\pm}(\pm{\ve}){\to}0,\,\ve{\to}+0,\label{2.7} \end{equation}
\begin{equation}-\int_{-a}^{0}x\mu_{-}(x)dx<\infty,\,
-\int_{0}^{a}x\mu_{+}(x)dx<\infty,\,0<a<\infty\label{2.8} \end{equation}
are true\end{thm}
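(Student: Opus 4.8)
The plan is to derive both assertions from the single hypothesis \eqref{1.3}, which I would first recast in the more usable form $\int_{|x|<1}x^{2}\mu(dx)<\infty$ together with $\mu(\set{|x|\geq 1})<\infty$. Both follow at once from \eqref{1.3}, since $\frac{x^{2}}{1+x^{2}}\geq\frac12 x^{2}$ on $|x|<1$ (because $1+x^{2}\leq 2$ there) and $\frac{x^{2}}{1+x^{2}}\geq\frac12$ on $|x|\geq 1$. In particular $x^{2}\mu(dx)$ is a finite measure near the origin, which is the fact I will lean on.

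For \eqref{2.7} I treat the $\mu_{-}$ case, the $\mu_{+}$ case being symmetric (there $\ve^{2}\mu_{+}(\ve)=-\ve^{2}\mu([\ve,\infty))$). Writing $\mu_{-}(-\ve)=\mu((-\infty,-\ve])$ for $0<\ve<1$ and peeling off the finite tail, $\ve^{2}\mu_{-}(-\ve)=\ve^{2}\mu((-\infty,-1])+\ve^{2}\mu((-1,-\ve])$, the first term vanishes trivially as $\ve\to+0$ since $\mu((-\infty,-1])<\infty$. The second term is the crux: the crude bound $\ve^{2}\mu((-1,-\ve])\leq\int_{(-1,-\ve]}x^{2}\mu(dx)$ (valid because $x^{2}\geq\ve^{2}$ on the domain) yields only \emph{boundedness}, not convergence to $0$. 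To upgrade it I would fix $\delta>0$ and, using continuity from above of the finite measure $x^{2}\mu(dx)$, choose $\eta\in(0,1)$ with $\int_{(-\eta,0)}x^{2}\mu(dx)<\delta$; then for $\ve<\eta$ split once more as $\ve^{2}\mu((-1,-\eta])+\ve^{2}\mu((-\eta,-\ve])$. The first piece $\to 0$ because $\mu((-1,-\eta])<\infty$, and the second is $\leq\int_{(-\eta,-\ve]}x^{2}\mu(dx)<\delta$. Hence $\limsup_{\ve\to+0}\ve^{2}\mu_{-}(-\ve)\leq\delta$ for every $\delta$, which gives \eqref{2.7}.

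For \eqref{2.8} note first, by \eqref{2.6b}, that the integrand $-x\mu_{-}(x)=|x|\mu_{-}(x)$ is nonnegative on $(-a,0)$, so the integral is well defined in $[0,\infty]$ and only its finiteness is in question. I would substitute $\mu_{-}(x)=\int_{(-\infty,x]}\mu(dy)$ and apply Tonelli's theorem over the set $\set{-a<x<0,\ y\leq x}$. The inner $dx$-integral equals $\int_{\max(-a,y)}^{0}|x|\,dx$, namely $y^{2}/2$ for $-a\leq y<0$ and $a^{2}/2$ for $y<-a$, so that
\[
-\int_{-a}^{0}x\mu_{-}(x)\,dx=\frac12\int_{[-a,0)}y^{2}\mu(dy)+\frac{a^{2}}{2}\,\mu\bigl((-\infty,-a)\bigr).
\]
Both terms are finite by the reformulated hypothesis, since $y^{2}\leq(1+a^{2})\frac{y^{2}}{1+y^{2}}$ on $[-a,0)$ and $\mu((-\infty,-a))\leq\frac{1+a^{2}}{a^{2}}\int\frac{y^{2}}{1+y^{2}}\mu(dy)$; the symmetric computation handles $\mu_{+}$.

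As an alternative that clarifies the role of \eqref{2.7}, one may integrate by parts in the Stieltjes sense, $-\int_{-a}^{0}x\mu_{-}(x)\,dx=\bigl[-\tfrac{x^{2}}{2}\mu_{-}(x)\bigr]_{-a}^{0}+\tfrac12\int_{[-a,0)}x^{2}\,d\mu_{-}(x)$, where the boundary term at $x=0$ vanishes precisely because of \eqref{2.7} (even when $\mu_{-}(0^{-})=+\infty$). I expect the genuine obstacle to be exactly the second term of \eqref{2.7}: the elementary estimate controls $\ve^{2}\mu((-1,-\ve])$ only up to a constant, and the passage from boundedness to vanishing requires the $\eta$-splitting above, i.e.\ the vanishing of the mass of $x^{2}\mu(dx)$ on shrinking neighborhoods of the origin.
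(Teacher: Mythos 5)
Your proof is correct, but it follows a genuinely different route from the paper's. The paper derives both claims from a single Stieltjes integration by parts: it writes $\int_{-a}^{-\ve}x^{2}d\mu_{-}(x)=\ve^{2}\mu_{-}(-\ve)-a^{2}\mu_{-}(-a)-2\int_{-a}^{-\ve}x\mu_{-}(x)dx$, bounds the left side by $M$ using \eqref{1.3}, obtains \eqref{2.8} from monotonicity-plus-boundedness of $-\int_{-a}^{-\ve}x\mu_{-}(x)dx$ as $\ve\to+0$, and then reads \eqref{2.7} off the same identity; that is, in the paper \eqref{2.7} is deduced \emph{from} \eqref{2.8}, which is exactly your ``alternative'' integration-by-parts remark run in the opposite direction. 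You instead prove the two statements independently and directly from \eqref{1.3}: \eqref{2.7} by peeling off the finite tail $\mu((-\infty,-1])$ and exploiting continuity from above of the finite measure $x^{2}\mu(dx)$ near the origin (the $\eta$-splitting), and \eqref{2.8} by Tonelli, which yields the exact identity $-\int_{-a}^{0}x\mu_{-}(x)\,dx=\frac{1}{2}\int_{[-a,0)}y^{2}\mu(dy)+\frac{a^{2}}{2}\mu((-\infty,-a))$ rather than mere finiteness. Your route costs a bit more measure theory but is arguably tighter at the one delicate point: the paper's identity, together with the convergence of the two integral terms, shows only that $\lim_{\ve\to+0}\ve^{2}\mu_{-}(-\ve)$ \emph{exists}; that the limit is $0$ rather than some $c>0$ needs a further observation (e.g.\ $c>0$ would force $-x\mu_{-}(x)\geq c/(2|x|)$ near the origin, contradicting \eqref{2.8}), which the paper leaves implicit. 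Your $\eta$-splitting confronts exactly this issue explicitly, and your Tonelli computation has the added benefit of identifying the value of the integral in \eqref{2.8}, not just its finiteness.
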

\begin{proof} According to \eqref{1.3} we have
\begin{equation}0{\leq}\int_{-a}^{-\ve}x^{2}d\mu_{-}(x){\leq}M,
\label{2.9}\end{equation}where $M$ does not depend from $\ve$.
Integrating by parts the integral of \eqref{2.9} we  obtain:
\begin{equation}\int_{-a}^{-\ve}x^{2}d\mu_{-}(x)=\ve^{2}\mu_{-}(-\ve)-a^{2}\mu_{-}(-a)
-2\int_{-a}^{-\ve}x\mu_{-}(x)dx{\leq}M,\label{1}\end{equation} The function $-\int_{-a}^{-\ve}x\mu_{-}(x)dx$ of $\ve$
is monotonic increasing. According
to \eqref{1} this function is bounded. Hence we have
\begin{equation}\lim_{\ve{\to}+0}\int_{-a}^{-\ve}x\mu_{-}(x)dx=\int_{-a}^{0}x\mu_{-}(x)dx
\label{2}\end{equation} It follows from \eqref{1} and \eqref{2} that
\begin{equation}\lim_{\ve{\to}+0}\ve^{2}\mu_{-}(-\ve)=0.\label{3}\end{equation}
Thus, relations \eqref{2.7} and \eqref{2.8} are proved for $\mu_{-}(x)$. In the same way
relations \eqref{2.7} and \eqref{2.8} can be proved  for $\mu_{+}(x)$.
\end{proof}
\paragraph{2.}Let us introduce the functions
\begin{equation}k_{-}(x)=\int_{-1}^{x}\mu_{-}(t)dt,\,-\infty{\leq}x<0,\label{2.11}
\end{equation}
\begin{equation}k_{+}(x)=-\int_{x}^{1}\mu_{+}(t)dt,\,0<x{\leq}+\infty.\label{2.12}
\end{equation}
 In view of \eqref{2.8}
the integrals in the right sides of \eqref{2.11} and \eqref{2.12} are absolutely convergent.
From \eqref{2.11} and \eqref{2.12} we obtain the assertions:
\begin{thm}\label{Theorem 2.2}
1.The function $k_{-}(x)$  is monotonically increasing on the   $(-\infty,0)$ and
\begin{equation}k_{-}(x){\geq}0,\,-1{\leq}x<0.\label{2.13}\end{equation}
2.The function $k_{+}(x)$  is monotonically decreasing on the   $(0,+\infty)$ and
\begin{equation}k_{+}(x){\geq}0,\,0<x{\leq}1.\label{2.14}\end{equation}
\end{thm}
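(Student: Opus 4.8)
The plan is to reduce both assertions to the sign conditions \eqref{2.6b} together with the elementary monotonicity of the integral. The observation driving everything is that $k_{-}$ and $k_{+}$ are, up to sign and a choice of base point, primitives of $\mu_{-}$ and $\mu_{+}$ respectively; hence their monotonicity is governed entirely by the sign of the integrand, and their nonnegativity on $[-1,0)$ (resp. $(0,1]$) follows because the base point of integration in \eqref{2.11}--\eqref{2.12} is precisely $\mp 1$.

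First I would treat $k_{-}$. For the monotonicity I take $x_{1}<x_{2}<0$ and use additivity of the integral in \eqref{2.11} to write
\[
k_{-}(x_{2})-k_{-}(x_{1})=\int_{x_{1}}^{x_{2}}\mu_{-}(t)\,dt .
\]
By \eqref{2.6b} the integrand is nonnegative on $(-\infty,0)$, so the right-hand side is $\geq 0$ and $k_{-}$ is monotonically increasing. For the nonnegativity I fix $x$ with $-1\leq x<0$; then \eqref{2.11} integrates the nonnegative function $\mu_{-}$ over the correctly oriented interval $[-1,x]$, whence $k_{-}(x)\geq 0$, which is \eqref{2.13}.

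The function $k_{+}$ is handled in the mirror-image manner, the only change being that the relevant sign is reversed. For $0<x_{1}<x_{2}$, formula \eqref{2.12} gives
\[
k_{+}(x_{2})-k_{+}(x_{1})=\int_{x_{1}}^{x_{2}}\mu_{+}(t)\,dt ,
\]
and since $\mu_{+}(t)\leq 0$ on $(0,\infty)$ by \eqref{2.6b}, the right-hand side is $\leq 0$, so $k_{+}$ is monotonically decreasing. For \eqref{2.14} I fix $0<x\leq 1$ and rewrite \eqref{2.12} as $k_{+}(x)=\int_{x}^{1}\bigl(-\mu_{+}(t)\bigr)\,dt$; the integrand is nonnegative and $x\leq 1$, so $k_{+}(x)\geq 0$.

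There is essentially no analytic obstacle here: the whole argument rests on \eqref{2.6b} and the fact that the integral of a function of constant sign over a correctly oriented interval inherits that sign. The single point requiring a word of care is that the integrals defining $k_{\pm}$ in \eqref{2.11}--\eqref{2.12} are finite for each admissible $x$; this is exactly the absolute convergence already recorded after \eqref{2.12} (a consequence of \eqref{2.8}), so I would simply invoke it rather than reprove it.
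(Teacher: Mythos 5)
Your proof is correct and is essentially the argument the paper intends: the paper states Theorem \ref{Theorem 2.2} as an immediate consequence of the definitions \eqref{2.11}--\eqref{2.12} together with the sign conditions \eqref{2.6b}, which is exactly what you have spelled out (sign of the integrand plus orientation of the interval of integration, with finiteness supplied by \eqref{2.8}). Nothing further is needed.
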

Further we need the following result:
\begin{thm}\label{Theorem 2.3} The following relations
\begin{equation}{\ve}k_{-}(-\ve){\to}0,\,\ve{\to}+0;\,{\ve}k_{+}(\ve){\to}0,\,\ve{\to}+0;\label{2.15}\end{equation}
\begin{equation}\int_{-1}^{0}k_{-}(x)dx<\infty;\,\int_{0}^{1}k_{+}(x)dx<\infty
\label{2.16}\end{equation}are valid.
\end{thm}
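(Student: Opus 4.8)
The plan is to establish the integrability statements \eqref{2.16} first, and then to deduce the limit relations \eqref{2.15} from them together with the monotonicity already recorded in Theorem \ref{Theorem 2.2}.

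First I would prove \eqref{2.16}. Writing out the definition \eqref{2.11} and interchanging the order of integration over the triangle $-1\le t\le x\le 0$ — which is legitimate by Tonelli's theorem, since $\mu_{-}(t)\ge 0$ on $(-1,0)$ by \eqref{2.6b} — I obtain
\[
\int_{-1}^{0}k_{-}(x)\,dx=\int_{-1}^{0}\Big(\int_{-1}^{x}\mu_{-}(t)\,dt\Big)\,dx
=\int_{-1}^{0}\mu_{-}(t)\Big(\int_{t}^{0}dx\Big)\,dt=-\int_{-1}^{0}t\,\mu_{-}(t)\,dt.
\]
By \eqref{2.8} with $a=1$ the right-hand side is finite, which gives the first inequality in \eqref{2.16}. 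The second inequality follows in exactly the same way from \eqref{2.12}, interchanging integration over $0\le x\le t\le 1$ and using $-\mu_{+}(t)\ge 0$ together with \eqref{2.8}.

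Next I would deduce \eqref{2.15} from \eqref{2.16}. Since $k_{-}$ is nonnegative and monotonically increasing on $(-1,0)$ by Theorem \ref{Theorem 2.2}, for every $x\in(-\ve,0)$ we have $k_{-}(x)\ge k_{-}(-\ve)$, whence
\[
\ve\,k_{-}(-\ve)\le\int_{-\ve}^{0}k_{-}(x)\,dx.
\]
By the finiteness just established, the tail integral on the right tends to $0$ as $\ve\to+0$, and since $\ve\,k_{-}(-\ve)\ge 0$ this forces $\ve\,k_{-}(-\ve)\to 0$. The statement for $k_{+}$ is obtained identically, using that $k_{+}$ is nonnegative and monotonically decreasing on $(0,1)$, so that $k_{+}(x)\ge k_{+}(\ve)$ for $x\in(0,\ve)$ and hence $\ve\,k_{+}(\ve)\le\int_{0}^{\ve}k_{+}(x)\,dx\to 0$.

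I expect the only point requiring genuine care to be the interchange of the order of integration in the derivation of \eqref{2.16}: one must verify that the integrand keeps a fixed sign on the relevant triangle so that Tonelli's theorem applies and the reduction to the already-proved bound \eqref{2.8} is valid. Everything else is a routine consequence of monotonicity and of the fact that the tail of a convergent integral vanishes.
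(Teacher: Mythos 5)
Your proof is correct, but its organization differs from the paper's. The paper proves both claims at once from a single integration-by-parts identity on the truncated interval $[-1,-\ve]$: starting from the bound \eqref{2.8} it writes $-\int_{-1}^{-\ve}x\mu_{-}(x)dx={\ve}k_{-}(-\ve)-k_{-}(-1)+\int_{-1}^{-\ve}k_{-}(x)dx$, deduces \eqref{2.16} because $\int_{-1}^{-\ve}k_{-}(x)dx$ is monotone in $\ve$ and bounded, and then obtains \eqref{2.15} by passing to the limit in that same identity (this last step is stated tersely in the paper; one must observe that the identity forces ${\ve}k_{-}(-\ve)$ to have a limit, and that a positive limit would make $k_{-}$ behave like $c/|x|$ near $0$, contradicting the just-proved integrability). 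You instead prove \eqref{2.16} first by Tonelli's theorem over the triangle $-1\le t\le x\le 0$, which eliminates boundary terms entirely and reduces directly to \eqref{2.8}, and then derive \eqref{2.15} by the independent tail estimate $0\le{\ve}k_{-}(-\ve)\le\int_{-\ve}^{0}k_{-}(x)dx\to 0$, using the monotonicity and nonnegativity from Theorem \ref{Theorem 2.2}. The two routes rest on the same underlying computation (Fubini and integration by parts are two faces of the same triangle identity), but your decoupling has a concrete advantage: the deduction of \eqref{2.15} from \eqref{2.16} is self-contained and avoids the implicit step in the paper's passage from \eqref{6} and \eqref{7} to \eqref{8}, since your squeeze pins the limit at $0$ directly rather than first establishing that a limit exists and then arguing it must vanish. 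The paper's approach, on the other hand, mirrors the proof of Theorem \ref{Theorem 2.1} and so keeps the two arguments structurally parallel.
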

\begin{proof} According to \eqref{2.8} we have
\begin{equation}0{\leq}-\int_{-1}^{-\ve}x\mu_{-}(x)dx{\leq}M,,
\label{5}\end{equation}where $M$ does not depend from $\ve$.
Integrating by parts the integral  of \eqref{5} we  obtain:
\begin{equation}-\int_{-1}^{-\ve}x\mu_{-}(x)dx={\ve}k_{-}(-\ve)-k_{-}(-1)
+\int_{-1}^{-\ve}k_{-}(x)dx{\leq}M,\label{6}\end{equation}
The function $\int_{-1}^{-\ve}k_{-}(x)dx$ of $\ve$
is monotonic increasing.This function is bounded (see \eqref{6}) . Hence we have
\begin{equation}\lim_{\ve{\to}+0}\int_{-1}^{-\ve}k_{-}(x)dx=\int_{-1}^{0}k_{-}(x)dx
\label{7}\end{equation} It follows from \eqref{6} and \eqref{7} that
\begin{equation}\lim_{\ve{\to}+0}{\ve}k_{-}(-\ve)=0.\label{8}\end{equation}
Thus, relations \eqref{2.15} and \eqref{2.16} are proved for $k_{-}(x)$. In the same way
relations \eqref{2.15} and \eqref{2.16} can be proved  for $k_{+}(x)$.
\end{proof}

\paragraph{3.}
We use  the following notation
\begin{equation}J(f)=J_{1}(f)+J_{2}(f),\label{2.17}\end{equation}where
\begin{equation}
J_{1}(f)=\frac{d}{dx}\int_{-\infty}^{x}f^{\prime}(y)k_{-}(y-x)dy,\quad  f(x){\in}C(a), \label{2.18} \end{equation}
\begin{equation}
J_{2}(f)=\frac{d}{dx}\int_{x}^{\infty}f^{\prime}(y)k_{+}(y-x)dy, \quad  f(x){\in}C(a). \label{2.19!} \end{equation}
\begin{lem}\label{Lemma 1.}
The operator $J(f)$ defined by \eqref{2.17} can be represented in the form
\begin{equation}
J(f)=\int_{-\infty}^{\infty}[f(y+x)-f(x)-y\frac{df(x)}{dx}1_{|y|{\leq}1}]{\mu}(dy)
+{\Gamma}f^{\prime}(x),
 \label{1.2.17} \end{equation}
 where $\Gamma=\overline{\Gamma}$ and
$f(x){\in}C(a)$.
\end{lem}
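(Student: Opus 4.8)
The plan is to establish the two half-line identities
\[
J_1(f)=\int_{-\infty}^{0}\Big[f(x+y)-f(x)-yf'(x)1_{|y|\le 1}\Big]\mu(dy)+\mu_-(-1)f'(x),
\]
\[
J_2(f)=\int_{0}^{\infty}\Big[f(x+y)-f(x)-yf'(x)1_{|y|\le 1}\Big]\mu(dy)+\mu_+(1)f'(x),
\]
and then add them. Summing produces \eqref{1.2.17} with $\Gamma=\mu_-(-1)+\mu_+(1)$, which is real since $\mu_-(-1)\ge 0$ and $\mu_+(1)\le 0$ by \eqref{2.6b}; hence $\Gamma=\overline\Gamma$. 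By symmetry it suffices to treat $J_1$.

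First I would put $J_1$ in a manifestly convergent form. The substitution $t=y-x$ in \eqref{2.18} gives $J_1(f)=\tfrac{d}{dx}\int_{-\infty}^{0}f'(x+t)k_-(t)\,dt$. Since $f\in C(a)$ is $C^2$ with compact support while $k_-$ is integrable near the origin by \eqref{2.16}, differentiation under the integral sign is legitimate and produces no boundary term, so
\[
J_1(f)=\int_{-\infty}^{0}f''(x+t)k_-(t)\,dt .
\]
Because $k_-(0^-)=\int_{-1}^{0}\mu_-(t)\,dt$ may be infinite, I would not integrate up to $0$ directly; instead I cut the integral off at $-\varepsilon$ and let $\varepsilon\to+0$ at the end. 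Using $k_-'=\mu_-$ and the vanishing of the boundary term at $-\infty$ (compact support of $f$), one integration by parts yields
\[
J_{1,\varepsilon}(f):=\int_{-\infty}^{-\varepsilon}f''(x+t)k_-(t)\,dt=k_-(-\varepsilon)f'(x-\varepsilon)-\int_{-\infty}^{-\varepsilon}f'(x+t)\mu_-(t)\,dt ,
\]
so that $J_1(f)=\lim_{\varepsilon\to+0}J_{1,\varepsilon}(f)$.

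On the Levy side I would compute the truncated jump integral $\int_{-\infty}^{-\varepsilon}g\,d\mu_-$, where $g(y)=f(x+y)-f(x)-yf'(x)1_{|y|\le 1}$, by splitting the range at $-1$ to remove the indicator and integrating by parts with $d\mu_-=\mu(dy)$. The boundary contributions at $-1$ collapse to $-\mu_-(-1)f'(x)$, the interior integral reproduces $-\int_{-\infty}^{-\varepsilon}f'(x+t)\mu_-(t)\,dt$ together with $f'(x)k_-(-\varepsilon)$, and there is a boundary term $g(-\varepsilon)\mu_-(-\varepsilon)$ at the cutoff. Subtracting the two expressions, the $\int f'\mu_-$ pieces cancel and I obtain
\[
\int_{-\infty}^{-\varepsilon}g\,d\mu_- - J_{1,\varepsilon}(f)=-\mu_-(-1)f'(x)+g(-\varepsilon)\mu_-(-\varepsilon)+k_-(-\varepsilon)\big(f'(x)-f'(x-\varepsilon)\big).
\]

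The only real obstacle is to show that the last two terms vanish as $\varepsilon\to+0$, and this is exactly where Theorems \ref{Theorem 2.1} and \ref{Theorem 2.3} enter. Taylor's formula for the $C^2$ function $f$ gives $g(-\varepsilon)=O(\varepsilon^2)$ and $f'(x)-f'(x-\varepsilon)=O(\varepsilon)$, so that $g(-\varepsilon)\mu_-(-\varepsilon)\to0$ is precisely relation \eqref{2.7}, while $k_-(-\varepsilon)\big(f'(x)-f'(x-\varepsilon)\big)\to0$ is precisely relation \eqref{2.15}. The convergence of every integral involved is guaranteed by \eqref{1.3}, \eqref{2.8} and \eqref{2.16}: near $0$ one controls $|f(x+y)-f(x)-yf'(x)|\le C y^2$ against $\int_{-1}^0 y^2\,d\mu_-<\infty$ and $|f'(x+y)-f'(x)|\le C|y|$ against $\int_{-1}^0|y|\mu_-(y)\,dy<\infty$. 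Passing to the limit gives the $J_1$ identity; repeating the argument verbatim with $k_+,\mu_+$ and splitting point $+1$ gives the $J_2$ identity with constant $\mu_+(1)$, and adding the two completes the proof.
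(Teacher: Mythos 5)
Your proof is correct and follows essentially the same route as the paper's: both arguments integrate by parts twice to move derivatives from $f$ onto $k_{\mp}$ and then onto $\mu(dy)$, split the integrals at $\mp 1$, identify the constant $\Gamma=\mu_{-}(-1)+\mu_{+}(1)=k_{-}^{\prime}(-1)+k_{+}^{\prime}(1)$ from the boundary terms there, and kill the boundary terms at the origin using relations \eqref{2.7} and \eqref{2.15}. The difference is only presentational: your explicit $\varepsilon$-cutoff near the origin (and use of compact support instead of the limit $a\to\infty$) spells out the limiting argument that the paper handles more tersely via \eqref{2.20} and equations \eqref{2.19}--\eqref{2.23}.
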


\begin{proof}
From \eqref{2.18}  we obtain  the relation
\begin{equation}
J_{1}(f)=-\int_{x-1}^{x}[f^{\prime}(y)-f^{\prime}(x)]
k_{-}^{\prime}(y-x)dy-
\int_{-a}^{x-1}f^{\prime}(y)k_{-}^{\prime}(y-x)dy. \label{2.19} \end{equation}
 By proving \eqref{2.19} we used relations
 \eqref{2.11},\eqref{2.15} and equality
 \begin{equation}\int_{x-1}^{x}k_{-}(y-x)dy=\int_{-1}^{0}k_{-}(v)dv.
 \label{2.20}\end{equation}We introduce the notations \begin{equation}
P_{1}(x,y)=f(y)-f(x)-(y-x)f^{\prime}(x),\quad
P_{2}(x,y)=f(y)-f(x). \label{2.21} \end{equation} Using notations \eqref{2.21}
we represent \eqref{2.19} in the form
\begin{equation}J_{1}(f)=-\int_{-1}^{0}\frac{\partial}{\partial{y}}P_{1}(x,y+x)\mu_{-}(y)dy
-\int_{-a-x}^{-1}\frac{\partial}{\partial{y}}P_{2}(x,y+x)\mu_{-}(y)dy, \label{2.22} \end{equation}Integrating by parts the integrals of \eqref{2.22}
 we deduce that
\begin{align}\nn J_{1}(f)=&f^{\prime}(x)\gamma_{1}+\int_{-1}^{0}P_{1}(x,y+x)d\mu_{-}(y)
+\int_{-a-x}^{-1}P_{2}(x,y+x)d\mu_{-}(y)
\\ &
+P_{2}(x,-a)\mu_{-}(-a), \label{2.23} \end{align}
where $\gamma_{1}=k_{-}^{\prime}(-1).$ It follows from \eqref{1.3} that the integrals in \eqref{2.23} are absolutely convergent. Passing to the limit in \eqref{2.23},when $a{\to}+\infty$, and taking into account  \eqref{2.19} , \eqref{2.20} we have
 \begin{equation}
J_{1}(f)=\int_{-\infty}^{x}[f(y+x)-f(x)-y\frac{df(x)}{dx}1_{|y|{\leq}1}]d{\mu_{-}}(y)
+{\gamma}_{1}f^{\prime}(x). \label{1.2.21} \end{equation}
In the same way it can be
proved that \begin{equation}
J_{2}(f)=\int_{x}^{\infty}[f(y+x)-f(x)-y\frac{df(x)}{dx}1_{|y|{\leq}1}]d{\mu_{+}}(y)
+{\gamma}_{2}f^{\prime}(x), \label{1.2.22} \end{equation}where
$\gamma_{2}=k_{+}^{\prime}(1).$ The relation \eqref{1.2.17} follows directly
from  \eqref{1.2.21} and \eqref{1.2.22}. Here $\Gamma=\gamma_{1}+\gamma_{2}.$ The lemma is
proved.
\end{proof}
\begin{Rk}\label{Remark 2.1.}
The operator $L_{0}f=\frac{d}{dx}f$ can be
represented in  form \eqref{2.1}, \eqref{2.2}, where \begin{equation}
S_{0}f=\int_{-\infty}^{\infty}p_{0}(x-y)f(y)dy, \label{2.23!} \end{equation}
\begin{equation}p_{0}(x)=\frac{1}{2}\,\mathrm{sign}(x). \label{2.24} \end{equation}
\end{Rk}

From Lemmas \ref{Lemma 1.},  and Remark \ref{Remark 2.1.} we deduce the following
assertion.

\begin{thm}\label{Theorem 2.4.}

The infinitesimal generator  $L$
has a
convolution type form \eqref{2.1},  \eqref{2.2}.
\end{thm}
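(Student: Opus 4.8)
The plan is to start from the Ito representation \eqref{1.6} and immediately substitute the identity supplied by Lemma \ref{Lemma 1.}. Writing $I(f)$ for the Levy integral term in \eqref{1.6}, formula \eqref{1.2.17} reads $I(f)=J(f)-\Gamma f'(x)=J_{1}(f)+J_{2}(f)-\Gamma f'(x)$, so that
\[
Lf = \frac{1}{2}A f'' + (\gamma - \Gamma) f' + J_{1}(f) + J_{2}(f).
\]
It then suffices to exhibit each of the three surviving terms as an operator of the form $\frac{d}{dx}\,S_{j}\,\frac{d}{dx}$ with $S_{j}$ a multiplication-plus-convolution operator, and to add up the corresponding kernels to read off $S$.

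The diffusion term is immediate: $\frac{1}{2}A f'' = \frac{d}{dx}(\frac{1}{2}A\frac{d}{dx}f)$, which contributes precisely the multiplication part $\frac{1}{2}Af$ of $S$ in \eqref{2.2}. For the drift term I would invoke Remark \ref{Remark 2.1.}: since $f' = \frac{d}{dx}S_{0}\frac{d}{dx}f$ with $S_{0}$ the convolution by $p_{0}(t)=\frac{1}{2}\,\mathrm{sign}(t)$, the term $(\gamma-\Gamma)f'$ equals $\frac{d}{dx}[(\gamma-\Gamma)S_{0}]\frac{d}{dx}f$, contributing the kernel $(\gamma-\Gamma)\frac{1}{2}\,\mathrm{sign}(x-y)$. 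The essential step is the jump part. By the definitions \eqref{2.18} and \eqref{2.19!},
\[
J_{1}(f)+J_{2}(f) = \frac{d}{dx}\left[\int_{-\infty}^{x} k_{-}(y-x)f'(y)\,dy + \int_{x}^{\infty} k_{+}(y-x)f'(y)\,dy\right].
\]
Setting $k_{0}(t)=k_{-}(t)$ for $t<0$ and $k_{0}(t)=k_{+}(t)$ for $t>0$, the two integrals merge into the single convolution $\int_{-\infty}^{\infty} k_{0}(y-x)f'(y)\,dy$, since the argument $y-x$ is negative exactly on $y<x$ and positive on $y>x$. Hence $J_{1}(f)+J_{2}(f)=\frac{d}{dx}\,\tilde S\,\frac{d}{dx}f$ with $\tilde S$ the convolution by $k_{0}$.

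Collecting the three contributions yields $L=\frac{d}{dx}\,S\,\frac{d}{dx}$ with $S$ of the form \eqref{2.2} and
\[
k(t) = k_{0}(t) + \frac{1}{2}(\Gamma-\gamma)\,\mathrm{sign}(t),
\]
the sign flip arising because $\mathrm{sign}(x-y)=-\mathrm{sign}(y-x)$. It then remains to verify the local integrability \eqref{2.3}: the $\mathrm{sign}$ term is bounded, while $\int_{-a}^{a}|k_{0}(t)|\,dt$ is finite because $k_{-},k_{+}$ are monotone (Theorem \ref{Theorem 2.2}) and integrable near the origin by \eqref{2.16}. The main obstacle I anticipate is purely bookkeeping rather than analytic: tracking the constant $\Gamma=\gamma_{1}+\gamma_{2}$ from Lemma \ref{Lemma 1.} against the drift $\gamma$ with the correct sign, and making sure the differentiations under the integral sign already built into \eqref{2.18}--\eqref{2.19!} are legitimate given the singular behaviour of $k_{\pm}$ at the origin, which is exactly what \eqref{2.15} is designed to control. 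No additional convergence issue for the Levy integral should appear, since that has already been discharged inside Lemma \ref{Lemma 1.}.
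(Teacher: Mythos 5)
Your proposal is correct and takes essentially the same route as the paper: the paper's own (one-line) proof deduces Theorem \ref{Theorem 2.4.} exactly by combining Lemma \ref{Lemma 1.} (to convert the Levy integral in \eqref{1.6} into $J_{1}+J_{2}$ at the cost of the term $\Gamma f^{\prime}$) with Remark \ref{Remark 2.1.} (to absorb the resulting drift $(\gamma-\Gamma)f^{\prime}$ into a sign-kernel convolution). Your write-up simply makes explicit the kernel bookkeeping $k(t)=k_{0}(t)+\frac{1}{2}(\Gamma-\gamma)\,\mathrm{sign}(t)$ and the local integrability check \eqref{2.3} that the paper leaves implicit.
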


\end{document}